\DeclareMathOperator{\gal}{Gal}
\DeclareMathOperator{\aut}{Aut}
\DeclareMathOperator{\id}{id}
\DeclareMathOperator{\St}{St}
\theoremstyle{definition}
\newtheorem{definizione}{Definition}[section]
\newtheorem{rmk}[definizione]{Remark}
\theoremstyle{plain}
\newtheorem{thm}[definizione]{Theorem}
\newtheorem{cor}[definizione]{Corollary}
\newtheorem{lem}[definizione]{Lemma}
\newtheorem{prop}[definizione]{Proposition}
\newtheorem{conj}[definizione]{Conjecture}
\newtheorem*{theorem*}{Theorem}
\newcommand{\bdef}{\begin{definizione}}
\newcommand{\ndef}{\end{definizione}}
\newcommand{\bthm}{\begin{thm}}
\newcommand{\nthm}{\end{thm}}
\newcommand{\brmk}{\begin{rmk}}
\newcommand{\nrmk}{\end{rmk}}
\newcommand{\brmks}{\begin{rmks}}
\newcommand{\nrmks}{\end{rmks}}
\newcommand{\bcor}{\begin{cor}}
\newcommand{\ncor}{\end{cor}}
\newcommand{\bpro}{\begin{proof}}
\newcommand{\npro}{\end{proof}}
\newcommand{\blem}{\begin{lem}}
\newcommand{\nlem}{\end{lem}}
\newcommand{\bprop}{\begin{prop}}
\newcommand{\nprop}{\end{prop}}
\newcommand{\bex}{\begin{ex}}
\newcommand{\nex}{\end{ex}}
\newcommand{\bexs}{\begin{exs}}
\newcommand{\nexs}{\end{exs}}
\newcommand{\disp}{\displaystyle}
\newcommand{\sub}{\subseteq}
\newcommand{\N}{\mathbb{N}}
\newcommand{\Z}{\mathbb{Z}}
\newcommand{\Q}{\mathbb{Q}}
\newcommand{\D}{\mathbb{D}}
\newcommand{\T}{\mathbb{T}}
\renewcommand{\T}{\mathcal{T}}
\newcommand{\p}{\mathfrak{p}}
\renewcommand{\O}{\mathcal{O}}
\newcommand{\X}{\mathcal{X}}
\author[A. Ferraguti]{Andrea Ferraguti}
\address{University of Cambridge\\
DPMMS\\
Centre for Mathematical Sciences\\
Wilbeforce Road, Cambridge, CB3 0WB, UK\\
}
\email{af612@cam.ac.uk}
\date{}
\title{The set of stable primes for polynomial sequences with large Galois group.}
\keywords{Arboreal Galois representations, number fields, stable primes, natural density}
\subjclass[2010]{Primary  11R32, 11R45, 20E08}
\begin{document}
\begin{abstract}
 Let $K$ be a number field with ring of integers $\O_K$, and let $\{f_k\}_{k\in \N}\subseteq \O_K[x]$ be a sequence of monic polynomials such that for every $n\in \N$, the composition $f^{(n)}=f_1\circ f_2\circ\ldots\circ f_n$ is irreducible. In this paper we show that if the size of the Galois group of $f^{(n)}$ is large enough (in a precise sense) as a function of $n$, then the set of primes $\p\sub\O_K$ such that every $f^{(n)}$ is irreducible modulo $\p$ has density zero. Moreover, we prove that the subset of polynomial sequences such that the Galois group of $f^{(n)}$ is large enough has density 1, in an appropriate sense, within the set of all polynomial sequences.
\end{abstract}
\maketitle
\section{Introduction}
In the recent years, there has been a growing interest in the field of arithmetic dynamics (see for example \cite{bened},\cite{bosjon},\cite{fermic1},\cite{fermic3},\cite{micheli},\cite{jon4},\cite{jon2},\cite{jon3},\cite{wreath}). One of its main objects of study is the arithmetic of dynamical systems given by a pair $(\mathbb P^1(K),f)$, where $K$ is a global field and $f$ is a rational function on $\mathbb P^1$. Standard questions include the determination of the set of periodic and pre-periodic points, the determination of integral points in orbits, the structure of field extensions attached to the iterations of $f$, and many others (see for example \cite{silv} for a comprehensive introduction on the topic).

When looking at the iterates of a rational function, one can construct very naturally an infinite tree, carring a natural profinite topology, on which the absolute Galois group of $K$ acts continuously, giving rise to what is called an \emph{arboreal Galois representation} (cf.\ section~\ref{arbo} for details). These extremely interesting objects resemble in many aspects $p$-adic representations coming from geometry (see for example~\cite{jon1} for a survey), such as the Tate modules attached to elliptic curves. In the general case, not much is known about the behaviour of arboreal Galois representations, but a certain number of results are available when the rational function has degree two (see~\cite{bosjon},\cite{bosjon2},\cite{jon1} and \cite{jon3}). In particular, it seems that generically the image of these representations is ``large'', i.e.\ it has finite index in the group of automorphisms of the appropriate tree. This phenomenon recalls closely Serre's open image theorem for elliptic curves without complex multiplication \cite{serre}.

Focusing on rational functions $f$ which are actually polynomials with coefficients in the ring of integers of $K$ yields a greater number of arithmetic questions, such as the determination of the set of prime divisors in orbits (see \cite{jon4} and \cite{odoni2}) or, in the case where all iterates are irreducible, the determination of the set of primes $\p$ such that all iterations of $f$ are irreducible modulo $\p$. Following the terminology of the existing literature, we will call such primes \emph{stable}. In \cite{jon2}, the author focuses on the case of quadratic polynomials and conjectures that, under some hypoteses on the post-critical orbit of $f$, the set of stable primes is finite (see~\cite[Conjecture~6.2]{jon2}).

In this paper, we address the problem of finding the density of the set of stable primes under a precise condition of largeness of the attached arboreal Galois representation (cf.\ Theorem~\ref{main_thm}), but for a more general class of objects that we will now introduce. In particular, such setting can be specialized to that of a dynamical system given by a polynomial of any degree.

Let $K$ be a number field with number ring $\O_K$, and let $\{f_k\}_{k\in \N}\subseteq \O_K[x]$ be a sequence of polynomials. For every $n\in \N$, let $f^{(n)}\coloneqq f_1\circ\ldots\circ f_n$, and suppose $f^{(n)}$ is separable for all $n$. The study of arithmetic dynamical systems corresponds to the case where the sequence $\{f_k\}$ is constant. As in the classical case, one can attach an arboreal Galois representation to the sequence $\{f_k\}$, where the Galois group of $f^{(n)}$ acts on the set of vertices at level $n$. Now suppose that $f^{(n)}$ is irreducible for every $n$. We call a prime $\p\subseteq \O_K$ \emph{stable} for $\{f_k\}$ if $f^{(n)}$ is irreducible modulo $\p$ for every $n$. Our main theorem is then the following.
\begin{thm}\label{thm_intro}
 Suppose that the image of the arboreal Galois representation attached to $\{f_k\}$ is large enough. Then the set of stable primes for $\{f_k\}$ has density 0.
\end{thm}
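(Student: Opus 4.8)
The plan is to translate stability of a prime into a Frobenius condition, apply the Chebotarev density theorem one level of the tree at a time, and then finish by a purely group-theoretic count inside the full tree automorphism group.

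\textbf{Step 1 (reduction to Frobenius).} Write $d_i=\deg f_i$ and let $T_n$ be the tree truncated at level $n$; separability of every $f^{(n)}$ forces each vertex at level $i-1$ to have exactly $d_i$ distinct children, so $T_n$ is the complete tree with branching data $(d_1,\dots,d_n)$ and $W_n:=\aut(T_n)$ is the iterated wreath product of $S_{d_1},\dots,S_{d_n}$, acting on the $\prod_{i\le n}d_i$ leaves. Let $L_n$ be the splitting field of $f^{(n)}$ over $K$ and $G_n=\gal(L_n/K)$; the arboreal representation realises $G_n$ as a transitive subgroup of $W_n$. For every prime $\p$ of $\O_K$ outside a finite exceptional set $E_n$ (those dividing $\disc f^{(n)}$ or ramifying in $L_n$), the factorisation type of $f^{(n)}\bmod\p$ is the cycle type of $\mathrm{Frob}_\p$ on the leaves of $T_n$; hence, for $\p\notin E_n$, the polynomial $f^{(n)}$ is irreducible modulo $\p$ iff $\mathrm{Frob}_\p$ lies in the conjugation-invariant set
\[
C_n:=\{\,g\in G_n:\ g\text{ acts transitively on the leaves of }T_n\,\}.
\]
In particular the set $S$ of stable primes satisfies $S\setminus E_n\subseteq\{\p:\mathrm{Frob}_\p\in C_n\}$ for every $n$.

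\textbf{Step 2 (Chebotarev, level by level).} For each fixed $n$, the Chebotarev density theorem for the finite extension $L_n/K$ gives $\dens\{\p:\mathrm{Frob}_\p\in C_n\}=|C_n|/|G_n|$. Combined with Step 1 and the finiteness of $E_n$, this yields $\overline{\dens}(S)\le |C_n|/|G_n|$ for every $n$, hence $\overline{\dens}(S)\le\inf_n|C_n|/|G_n|$. It therefore suffices to prove $|C_n|/|G_n|\to 0$.

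\textbf{Step 3 (the wreath count and largeness).} Since $G_n\le W_n$, one has $|C_n|\le\#\{g\in W_n:g\text{ transitive on the leaves of }T_n\}$, and this number equals $|W_n|/\prod_{i\le n}d_i$. Indeed, writing $W_n=W'\wr S_{d_1}$ with $W'$ the automorphism group of the common depth-$(n-1)$ subtree hanging from the root, an element $(\tau;\sigma_1,\dots,\sigma_{d_1})$ is transitive on the leaves precisely when $\tau$ is a $d_1$-cycle and the first-return element of $W'$ — a product of all the $\sigma_i$ in a cyclic order — is transitive on the leaves of the subtree; for $\tau$ a $d_1$-cycle and $\sigma_1,\dots,\sigma_{d_1}$ independent uniform, that product is uniform in $W'$, so the proportion $p_n$ of transitive elements satisfies $p_n=\tfrac1{d_1}p_{n-1}$ and hence $p_n=1/\prod_{i\le n}d_i$. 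Therefore
\[
\frac{|C_n|}{|G_n|}\ \le\ \frac{|W_n|}{|G_n|\,\prod_{i\le n}d_i}\ =\ \frac{[\,W_n:G_n\,]}{\prod_{i\le n}\deg f_i}.
\]
The largeness hypothesis is exactly what forces the right-hand side to $0$, since (in its precise form) it guarantees $[\,W_n:G_n\,]=o\bigl(\prod_{i\le n}\deg f_i\bigr)$; plugging this in gives $\overline{\dens}(S)=0$, and since the upper density vanishes the density of $S$ exists and equals $0$.

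\textbf{Main obstacle.} The analytic inputs (Chebotarev per level, and the dictionary between the factorisation of $f^{(n)}\bmod\p$ and the Frobenius cycle type) are standard; the heart of the matter is the group theory. One cannot estimate $|C_n|$ for an arbitrary transitive subgroup $G_n$ — a cyclic transitive group can consist almost entirely of full cycles — so the argument hinges on bounding $|C_n|$ inside the ambient $W_n$, where the iterated wreath structure pins the proportion of full cycles to exactly $1/\prod_{i\le n}\deg f_i$. The delicate point is the calibration: one must verify that the precise notion of "largeness" of $G_n$ really does make $[\,W_n:G_n\,]$ negligible against this $1/\prod\deg f_i$ decay.
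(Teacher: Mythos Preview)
Your proof is correct and follows essentially the same strategy as the paper: reduce via Chebotarev/Frobenius density to bounding the proportion of full cycles in $G_n$, pass to the ambient group $W_n$, and show that the proportion of full cycles in $W_n$ is exactly $1/\prod_{i\le n}d_i$, so that the largeness hypothesis $[W_n:G_n]=o(d^{(n)})$ finishes the argument. The only minor difference is in the proof of the cycle count: the paper decomposes $W_{n+1}=W_n\wr S_{d_{n+1}}$ (adding a bottom level) and enumerates explicitly, while you decompose at the root and use the first-return argument---a slicker packaging of the same induction.
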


In section~\ref{arbo} we introduce the arboreal Galois representation attached to $\{f_k\}$, we explain the condition of ``largeness'' mentioned in Theorem~\ref{thm_intro} and we show some of the consequences of the theorem. In section~\ref{autom} we describe the structure of the automorphism group of the infinite tree attached to $\{f_k\}$ and we compute the cardinality of the automorphism group of the tree truncated at level $n$. In section~\ref{main_proof} we prove Theorem~\ref{thm_intro}, explaining how it follows from Chebotarev density theorem together with the computations of section~\ref{autom}. Finally, in section~\ref{generic_case} we address the following question: suppose we fix a sequence of degrees $\{d_k\}_{k\in \N}$, and then we choose ``at random'' a sequence of polynomials $\{f_k\}$ such that $f_k$ has degree $d_k$ for every $k$. What are the odds that such sequence fulfils the hypotheses of Theorem~\ref{thm_intro}? After introducing an adequate concept of density on the set of all polynomial sequences of fixed degree, we prove that the density of the set of polynomial sequences that fulfil such hypotheses is 1. The result follows from theorems of Cohen \cite{cohen} and Odoni \cite{odoni}.

\section*{Acknowledgements}
We would like to thank Giacomo Micheli and Rafe Jones for their helpful comments. The author was supported by Swiss National Science Foundation grant number 168459.

\section{Arboreal Galois representations}\label{arbo}
Let $K$ be a field and let $\{f_k\}\coloneqq \{f_k\}_{k\in \N}\sub K[x]$ be a sequence of polynomials. For every $n\geq 1$, we set $d_n\coloneqq \deg f_n$ and we let $f^{(n)}$ be the composition $f_1\circ f_2\circ\ldots\circ f_n$. We assume that $f^{(n)}$ is separable for every $n$.

It is possible to attach to $\{f_k\}$ an infinite tree $\T$ in the following way: the root of the tree is labeled by $0$, and for every $n\geq 1$, the vertices at level $n$ are labeled by the roots of $f^{(n)}$ in $\overline{K}$. A vertex $\alpha$ at level $n$ descends from a vertex $\beta$ at level $n-1$ if and only if $f_n(\alpha)=\beta$. Note that thanks to the separability assumption, every vertex at level $n$ has exactly $d_{n+1}$ descendants at level $n+1$. Such a tree is called \emph{spherically homogeneous rooted tree} (see for example \cite{bar} and \cite{cec}), and it depends only on the sequence of the degrees $(d_1,d_2,\ldots, d_n,\ldots)$, which is called \emph{spherical index} of $\T$. When the spherical index is constant and equal to some $d\in \N$, $\T$ is called a \emph{complete rooted $d$-ary tree}.

For every $n\geq 1$, we let $\T_n$ be the tree truncated at level $n$. This consists of the finite tree formed by all vertices which have distance at most $n$ from the root.

\begin{figure}[h]
\centering

\begin{tikzpicture}
\tikzstyle{solid node}=[circle,draw,inner sep=1.5,fill=black]
\node[solid node]{}[sibling distance=40mm]
child{node[solid node]{}[sibling distance=13mm]
child{node[solid node]{}[sibling distance=7mm]
child{node[solid node]{}}
child{node[solid node]{}}}
child{node[solid node]{}[sibling distance=7mm]
child{node[solid node]{}}
child{node[solid node]{}}}
child{node[solid node]{}}[sibling distance=7mm]
child{node[solid node]{}}
child{node[solid node]{}}}
child{node[solid node]{} [sibling distance=13mm]
child{node[solid node]{}[sibling distance=7mm]
child{node[solid node]{}}
child{node[solid node]{}}}
child{node[solid node]{}[sibling distance=7mm]
child{node[solid node]{}}
child{node[solid node]{}}}
child{node[solid node]{}[sibling distance=7mm]
child{node[solid node]{}}
child{node[solid node]{}}}
};
\end{tikzpicture}\caption{A spherically homogeneous tree of spherical index $(2,3,2,\ldots)$ truncated at level 3.} \label{fig:M1}
\end{figure}
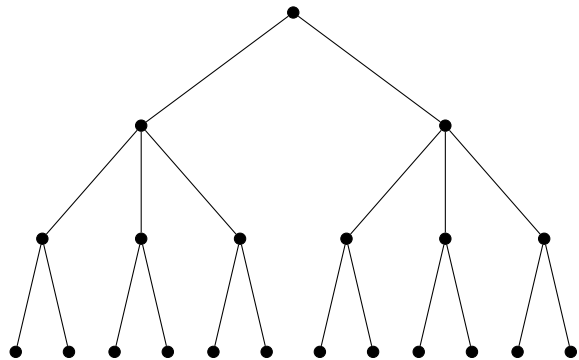
            
From now on, we let $G_n$ be the Galois group of $f^{(n)}$. The action of $G_n$ on the roots of $f^{(n)}$ extends naturally to an action on $\T_n$, and this yields an embedding of $G_n$ into the group of automorphisms of $\T_n$. Recall that an automorphism of a tree is a bijection $\sigma$ of the set of vertices such that a vertex $v$ is connected with a vertex $v'$ if and only if $\sigma(v)$ is connected with $\sigma(v')$. It follows immediately that a tree automorphism induces a permutation of the set of vertices at level $n$, for every $n$. Thus there are obvious projection maps $\pi_n\colon \aut(\T_n)\to \aut(\T_{n-1})$ and the automorphism group of $\T$ can be realized as
$$\aut(\T)\simeq \varprojlim_n\aut(\T_n).$$
On the other hand, it is immediate to check that for every $n$, the splitting field of $f^{(n)}$ is contained in the splitting field of $f^{(n+1)}$. Thus, there are surjections $G_{n+1}\to G_n$ and the profinite group $\disp G_{\{f_k\}}\coloneqq \varprojlim_nG_n$ acts on $\T$ as a subgroup of $\aut(\T)$, giving rise to a continuous embedding $G_{\{f_k\}}\to \aut(\T)$. This motivates the following definition, generalizing the one given in \cite{bosjon}.

\bdef
	An \emph{arboreal Galois representation} of a profinite group $G$ is a continuous homomorphism $G\to \aut(\T)$, where $\T$ is a spherically homogeneous rooted tree.
\ndef
In the particular case where $\{f_k\}$ is a constant sequence, one recovers \cite[Definition~1.1]{bosjon}.

From now on, $K$ will be a number field with ring of integers $\mathcal O_K$, and $f_k\in \O_K[x]$ will be monic for every $k\in \N$. We assume throughout the paper that $f^{(n)}$ is irreducible for every $n$. We also set $d^{(0)}=1$ and for every $n\geq 1$ we let $d^{(n)}\coloneqq \deg f^{(n)}$.
\bdef
	We say that a prime ideal $\p\subseteq \O_K$ is \emph{stable} for $\{f_k\}$ if $f^{(n)}$ is irreducible modulo $\p$ for every $n\geq 1$.

	The set of stable primes for $\{f_k\}$ will be denoted by $\St(\{f_k\})$.
\ndef
The main goal of this paper is to prove the following theorem.
\bthm\label{main_thm}
	Suppose that the index of $G_n$ in $\aut(\T_n)$ is $o(d^{(n)})$. Then $\St(\{f_k\})$ has density $0$.
\nthm

Recall that if $S$ is a set of prime ideals of $\O_K$, the \emph{natural density} of $S$ and the \emph{Dirichlet density} of $S$ are defined respectively as:
$$\lim_{x\to \infty}\frac{|\p\in S\colon N_{K/\Q}(\p)\leq x|}{|\p\sub \O_K\colon N_{K/\Q}(\p)\leq x|} \mbox{ and } \lim_{s\to 1^+}\frac{\sum_{\p\in S}N_{K/\Q}(\p)^{-s}}{\sum_{\p\sub\O_K}N_{K/\Q}(\p)^{-s}},$$
(provided that the limits exist). The word ``density'' in Theorem~\ref{main_thm} refers to either concept of density, since the result is true for both. The density of a set $S$ will be denoted by $\delta(S)$. The \emph{upper density} of $S$, which is defined using $\limsup$ in place of $\lim$ in the above formulas, will be denoted by $\overline{\delta}(S)$.

Theorem~\ref{main_thm} has the following immediate consequence.
\bcor\label{finite_index}
	Suppose that the sequence $\{d_k\}_{k\in \N}$ is not eventually 1 (i.e.\ that $d^{(n)}\to+\infty$) and that the arboreal Galois representation of $G_{\{f_k\}}$ has finite index image in $\aut(\T)$. Then the set of stable primes for $\{f_k\}$ has density $0$.
\ncor
\bpro
	Just note that since $G_n\leq \aut(\T_n)$ for every $n$, the group $G_{\{f_k\}}$ has finite index in $\aut(\T)$ if and only if the index of $G_n$ in $\aut(\T_n)$ is eventually constant, and therefore in particular it is $o(d^{(n)})$ as $d^{(n)}\to+\infty$.
\npro

\brmk
It is very easy to see that the conclusion of Theorem~\ref{main_thm} fails if we drop the assumption on the size of $G_n$. For example, if $\{f_k\}$ is the constant sequence with $f_k=x^2-2\in \Z[x]$, then the Galois group of $f^{(n)}$ is the cyclic group of order $2^n$ (see~\cite{bosjon}). Now \cite[Theorem~2.2]{jon2} shows that, for every fixed $n$, $f^{(n)}$ is irreducible modulo $p$ if and only if $p\equiv 3,5\bmod 8$. Thus in this case, the set of stable primes for $\{f_k\}$ has density $1/2$.
\nrmk
Clearly, one can use Theorem~\ref{main_thm} in its contrapositive form to prove that the index of $G_{\{f_k\}}$ in $\aut(\T)$ is infinite. An explicit example can be constructed as follows. Let $p$ be any prime, and let $f_k\coloneqq (x-p^{2k+1})^2+p^{2k-1}$ for every $k\geq 1$. Then \cite[Proposition~3.3]{fermic1} shows that if $q$ is a prime with $\displaystyle\left(\frac{p}{q}\right)=\left(\frac{-p}{q}\right)=-1$, then the composition $f^{(n)}$ is irreducible modulo $q$ for every $n\in \N$. Quadratic reciprocity easily implies the existence of a set of positive density of such $q$'s, showing that the index of $G_{\{f_k\}}$ in $\aut(\T)$ is infinite.

The same argument furnishes a different proof of some cases of \cite[Theorem 3.1]{jon1}: if $f\in \O_K[x]$ is a polynomial of degree 2 such that all its iterates are irreducible and the span of its post-critical orbit in $K^*/{K^*}^2$ is finite and does not contain the origin, then the set of stable primes for $f$ has positive density (see \cite[Theorem 6.1]{jon2}). Thus, by Theorem~\ref{main_thm} the attached Galois representation has infinite index in $\aut(\T)$. This applies for example to the polynomial $(x-t)^2 + t + 1\in \Z[x]$, where $t\in\Z$ is such that $\pm t$ and $\pm (t+1)$ are all non-squares.

When $\{f_k\}$ is a constant sequence of spherical index 2, we fall back in the setting studied for example in \cite{bosjon},\cite{jon2},\cite{jon1} or \cite{jon3}. Let us briefly recall such setting. Let $\phi\in K(x)$ be a rational function of degree 2. A \emph{critical point} $\gamma$ of $\phi$ is a point $\gamma\in\mathbb P^1$ such that $\phi'(\gamma)=0$. Recall that $\phi$ is said to be \emph{post-critically finite} if the orbit of every critical point under $\phi$ is finite. Generalizing in the obvious way the construction we discussed above, one attaches a complete rooted $2$-ary tree $\T$ to $\phi$, and there is a continuous action of the absolute Galois group of $K$ on it, giving rise to an arboreal Galois representation. Let $G_{\phi}$ be the image of such representation.
\begin{conj}[{\cite[Conjecture~3.11]{jon1}}]\label{conj}
 The index of $G_{\phi}$ in $\aut(\T)$ is finite if and only if one of the following holds:
 \begin{enumerate}
  \item The map $\phi$ is post-critically finite.
  \item The two critical points $\gamma_1$ and $\gamma_2$ of $\phi$ have a relation of the form $\phi^{(r+1)}(\gamma_1)=\phi^{(r+1)}(\gamma_2)$ for some $r\geq 1$.
  \item $0$ is periodic under $\phi$.
  \item There is a non-trivial M\"obius transformation $m$ that fixes $0$ and such that $\phi\circ m=m\circ \phi$.
 \end{enumerate}

\end{conj}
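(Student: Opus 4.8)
The statement labelled Conjecture~\ref{conj} is an open problem --- it is the arboreal analogue of Serre's open image theorem, and no complete proof is known even for the one‑parameter family $\phi=x^2+c$. What follows is therefore a plan of attack and an account of where the real difficulty lies, rather than a proof. Note first that the biconditional splits into two very unequal halves. The \emph{obstruction direction} --- each of (1)--(4) forces $[\aut(\T):G_\phi]=\infty$ --- is accessible with currently available tools, including those of this paper. The \emph{converse} --- that an infinite‑index image must come from one of the four listed degeneracies --- is the genuine open‑image statement, and it is this that remains out of reach.

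For the obstruction direction I would treat the four cases by reducing each to a statement about stable primes and then invoking Theorem~\ref{main_thm} contrapositively, exactly as in the Remark following that theorem for $x^2-2$ and in the examples that follow Corollary~\ref{finite_index}. Concretely: in case (1) the post‑critical set is finite, so the discriminants of the $f^{(n)}$ lie in a finitely generated subgroup of $K^*/(K^*)^2$ (resp.\ of the relevant square classes of the splitting algebra for non‑polynomial $\phi$); the same collapse happens in case (3), since periodicity of $0$ makes the critical‑orbit values that govern $\disc f^{(n)}$ eventually cyclic, and in case (2), since a relation $\phi^{(r+1)}(\gamma_1)=\phi^{(r+1)}(\gamma_2)$ identifies two branches of the critical orbit and again bounds the square‑class span. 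In all three one then produces a positive‑density set of stable primes (this is essentially \cite[Theorem~6.1]{jon2} in the polynomial case, and the $q$‑argument used after Corollary~\ref{finite_index} in the general case), whence by Theorem~\ref{main_thm} the index $[\aut(\T_n):G_n]$ is not $o(2^n)$, so $[\aut(\T):G_\phi]=\infty$. Case (4) is structurally different and I would argue it directly: the Möbius relation $\phi\circ m=m\circ\phi$ with $m(0)=0$ produces a nontrivial automorphism of $\T$ centralising the whole image $G_\phi$, and a centraliser computation in the iterated wreath product shows this forces the index to be infinite.

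For the converse, assume none of (1)--(4) holds; one wants $[\aut(\T):G_\phi]<\infty$. The plan would be: (i) reduce to showing that the indices $[\aut(\T_{n}):G_n]$ are \emph{eventually constant}, i.e.\ that at each sufficiently high level the newly introduced ramification is maximal --- equivalently that $K_n\bigl(\sqrt{\disc f^{(n+1)}}\,\bigr)$ (and, in the non‑polynomial case, the finer square‑class data attached to the splitting algebra of $f^{(n+1)}$ over $K_n$) is a genuinely new extension for all large $n$, where $K_n$ is the splitting field of $f^{(n)}$; (ii) use the standard discriminant recursion to express $\disc f^{(n+1)}$ in terms of the critical‑orbit values $f^{(n)}(\gamma_i)$; (iii) prove the key Diophantine input, namely that in the absence of (1)--(4) the subgroup of $K^*/(K^*)^2$ (resp.\ of the relevant square classes) generated by $\{f^{(n)}(\gamma_1),f^{(n)}(\gamma_2),\,0\text{-orbit data}\}$ grows without bound and gains a new generator at every step; (iv) conclude via a profinite/Chebotarev argument that non‑degeneracy at each level upgrades to finite index globally. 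Step (iii) is exactly where the four exceptional families must be shown to be the \emph{only} sources of hidden multiplicative relations among the critical‑orbit values.

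The main obstacle is precisely step (iii): it is an ``unlikely intersections''/Mordell--Lang‑type assertion about orbits of rational maps, and even in the tame case $\phi=x^2+c$ establishing multiplicative genericity of the post‑critical orbit for all but the listed $c$ is open and intertwined with deep questions on prime divisors in orbits (and, ultimately, with $abc$‑type input). For arbitrary degree‑$2$ rational $\phi$ there is the further problem that we do not yet have the right finiteness theorem describing when coincidences among $\phi^{(n)}(\gamma_1)$, $\phi^{(n)}(\gamma_2)$ and the fibres over $0$ can occur, so even the list (1)--(4) cannot presently be proved exhaustive. Realistically, then, what one can hope to prove with the techniques of this paper is the obstruction direction; the converse should be regarded as a major open problem that I would not expect to resolve here.
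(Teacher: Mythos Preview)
The statement is a \emph{conjecture}, not a theorem: the paper does not prove it but merely quotes it from \cite{jon1} and then uses it as a conditional hypothesis in the corollary that follows. There is therefore no proof in the paper to compare your proposal against. You correctly recognise this and frame your write-up as a plan of attack together with an honest assessment of where the difficulty lies, which is the appropriate response.

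Two side remarks. First, the paper's statement contains an apparent slip: as printed it says the index is \emph{finite} if and only if one of (1)--(4) holds, whereas the intended (and standard) formulation --- and the one the paper actually uses in the proof of the subsequent corollary --- is that the index is \emph{infinite} under those conditions. You have silently read the intended version. Second, your sketch of the obstruction direction, while broadly on the right track, would need some tightening before it became a proof: in case (3), periodicity of $0$ already gives $f^{(n)}(0)=0$ for some $n$, so in the polynomial setting irreducibility fails outright and Theorem~\ref{main_thm} does not literally apply; and the positive-density-of-stable-primes input you invoke from \cite{jon2} carries an extra nondegeneracy hypothesis (the square-class span must not contain the trivial class) that is not automatic from (1) or (2) alone. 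None of this affects your main and correct point, namely that the converse is the genuine open-image problem and lies well beyond the scope of this paper.
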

Let now $f\in \O_K[x]$ be a monic polynomial of degree 2, let $f_k=f$ for every $k$ and assume $f^{(n)}$ is irreducible for every $n$. 
\bcor
  Let $f$ be as above and assume Conjecture~\ref{conj}. If $f$ is not post-critically finite, then the set of stable primes for $f$ has density $0$. 
\ncor
\begin{proof}
 By Corollary~\ref{finite_index}, it is enough to check that conditions (2),(3) and (4) are never satisfied by such a polynomial. Condition (2) clearly does not hold because $f$ has two distinct critical points, one of which is $\infty$, and so it is fixed by $f$, and the other one is never mapped to $\infty$ by any iterate of $f$. Condition (3) cannot hold because $f^{(n)}$ is irreducible for every $n$ by assumption. A direct computation shows that if (4) holds for an irreducible polynomial, then this polynomial must be $(x-1)^2+1$, which is post-critically finite (and conversely, such polynomial commutes with $x/(x-1)$).
\end{proof}

%For example, let $f_k=f=(x-t)^2+t-1\in \Z[x]$ for every $k$, where $t\in \Z$ is such that no element of the set $S=\{1-t,t,t-1\}$ is a square. Then \cite[Theorem~2.2]{jon2} ensures that $f^{(n)}$ is irreducible for every $n$. The same theorem shows that $f^{(n)}$ is irreducible modulo $p$ if and only if no element of $S$ is a square modulo $p$. The fact that $t-1$ and $t$ are coprime, together with quadratic reciprocity, implies immediately the existence of a set of primes $p$ of positive density such that no element of $S$ is a square modulo $p$. Therefore, by Theorem~\ref{main_thm} the index of $G_f$ in $\aut(\T)$ is infinite. We remark that this is already proven in~\cite{jon1}, using a completely different argument.
\section{The automorphism group of \texorpdfstring{$\T_n$}{}}\label{autom}

In order to describe the group of automorphisms of $\T_n$, we first recall the construction of the wreath product of groups. Let $G,H$ be two groups and $R$ be a set on which $G$ acts by permutations from the left. If $g\in G$ and $r\in R$, we denote by $g\cdot r$ the action of $g$ on $r$. Let $H^R\coloneqq \prod_{r\in R}H_r$, where each $H_r$ is an isomorphic copy of $H$. The action of $G$ extends naturally to $H^R$ via $g\cdot (h_r)_{r\in R}=(h_{g^{-1}\cdot r})_{r\in R}$. This defines a homomorphism 
$$\Phi \colon G\to \text{Aut}(H^R)$$
$$g\mapsto (\varphi_g\colon (h_r)_r\mapsto g\cdot (h_r)_r).$$
The \emph{wreath product} of $G$ by $H$ is defined by:
$$G\wr_R H\coloneqq G\ltimes_{\Phi} H^R.$$
Now suppose that $G$ is a subgroup of the symmetric group on $d$ symbols $S_d$ and $H$ is a subgroup of $S_e$. Let $R\coloneqq\{1,\ldots,d\}$ and $T=\{1,\ldots,e\}$. Then $G\wr_R H$ acts from the left on $R\times T$ via the following rule:
$$(g,(h_r)_r)\cdot (r_0,t_0)=(g\cdot r_0,h_{g\cdot r_0}\cdot t_0).$$
%In particular, when $G=H$ the wreath product $G\wr_R G$ acts on $R\times R$. The \emph{$n$-fold iterated wreath product} $G^{(n)}$ of $G$ is defined inductively as:
%$$G^{(1)}\coloneqq G, \quad G^{(n)}\coloneqq G^{(n-1)}\wr_{R^{n-1}} G \mbox{ for } n>1.$$

Let now $\T$ be the spherically homogeneous tree of spherical index $\{d_k\}_{k\in \N}$.
\bthm[{\cite[Theorem~2.1.15]{cec}}]\label{wreath}
	The automorphism group of $\T_n$ is isomorphic to the wreath product:
	$$S_{d_1}\wr S_{d_2}\wr\ldots\wr S_{d_n}$$
\nthm
The wreath product of groups is associative and therefore Theorem~\ref{wreath} implies that we can think of $\aut(\T_n)$ as $\aut(\T_{n-1})\wr S_{d_n}$; we will make use of this fact in what follows.
From now on, we denote by $W_n$ the automorphism group of $\T_n$.
\bcor\label{wreath_card}
	The cardinality of $W_n$ is $\disp\prod_{i=1}^n{(d_i!)}^{d^{(i-1)}}$.
\ncor
\bpro
	If $G,H$ are finite groups with $G$ acting on a finite set $R$, it is clear from the definition that $|G\wr_R H|=|G|\cdot |H|^{|R|}$. The claim follows by an easy induction using Theorem~\ref{wreath} and the fact that $S_{d_1}\wr S_{d_2}\wr\ldots\wr S_{d_n}$ acts on $d^{(n)}$ symbols.
\npro
\brmk\label{tree_action}
	It is useful to understand how elements of $W_n$ act on the vertices of $\T_n$. Notice that $W_n$ is a subgroup of $S_{d^{(n)}}$ by construction, and the set $V_n$ of the vertices at level $n$ has cardinality $d^{(n)}$. In fact, since automorphisms of $\T_n$ preserve connected vertices, in order to describe the action of $W_n$ on the vertices of $\T_n$ it is enough to specify the action of $W_n$ on $V_n$. For every $i\in \{1,\ldots,n\}$, let $R_i$ be the set $\{1,\ldots,d_i\}$. Each element $v\in V_n$ can be uniquely identified by a sequence $(t_1,\ldots,t_n)$ where $t_i\in R_i$ for every $i$. This identification comes from labeling the descendants of a vertex at level $i$ with the elements of $R_{i+1}$, so that the sequence $(t_1,\ldots,t_n)$ describes the unique path from the root of the tree to the corresponding vertex in $V_n$. Now, elements of $W_n$ are of the form $(g,(h_r)_{r\in R^{(n-1)}})$, where $g\in W_{n-1}$, $h_r\in S_{d_n}$ for every $r$ and $R^{(n-1)}=R_1\times\ldots\times R_{n-1}$. Thus, $g$ acts inductively on the sequence $(t_1,\ldots,t_{n-1})$, yielding a new sequence $(t_1',\ldots,t_{n-1}')$, and we have that
	$$(g,(h_r)_{r\in R^{(n-1)}})\cdot (t_1,\ldots,t_n)=(t_1',\ldots,t_{n-1}',h_{(t_1',\ldots,t_{n-1}')}\cdot t_n).$$
\nrmk

\section{Proof of the main theorem}\label{main_proof}
Our goal is to show that $\delta(\St(\{f_k\}))=0$.

Let $\St(f^{(n)})$ be the set of primes $\p$ such that $f^{(n)}$ is irreducible modulo $\p$. Notice that $\St(f^{(i)})\sub \St(f^{(j)})$ whenever $i\geq j$. It is immediate to see that
$$\St(\{f_k\})=\bigcap_{n\in \N}\St(f^{(n)}).$$
Since $\St(\{f_k\})\sub \St(f^{(n)})$ for every $n$, then $\overline{\delta}(\St(\{f_k\}))\leq \overline{\delta}(\St(f^{(n)}))$ for every $n$. Therefore, in order to prove Theorem~\ref{main_thm} it is enough to show that $\delta(\St(f^{(n)}))$ exists for every $n$ and converges to $0$ as $n\to \infty$. In fact, if this happens then $\overline{\delta}(\St(f^{(n)}))$ converges to $0$ as well, forcing $\overline{\delta}(\St(f))=0$ and finally $\delta(\St(f))=0$, by the obvious fact that $\delta(\St(f))\leq\overline{\delta}(\St(f))$. 

Thus, we reduced the proof of Theorem~\ref{main_thm} to proving the following claim:
\begin{center}
	($\spadesuit$) The density of $\St(f^{(n)})$ exists and converges to $0$ as $n\to\infty$.
\end{center}

Let us now recall the following fundamental theorem, which is a weaker version of Chebotarev's density theorem.
\bthm[Frobenius density theorem]\label{Frobenius}
Let $g(x)\in \O_K[x]$ be monic and irreducible of degree $d$. Let $G\sub S_d$ be the Galois group of $g$. Let $a_1\leq a_2\leq\ldots\leq a_t$ be natural numbers such that $a_1+\ldots+ a_t=d$. Let $\Gamma\sub G$ be the set of elements whose decomposition in disjoint cycles has the form $c_1\cdot c_2\cdot\ldots\cdot c_t$, where $c_i$ is a cycle of length $a_i$. Then the set
$$\{\mbox{primes $\p\sub \O_K$ s.t.\ $g(x)$ has decomposition type $(a_1,\ldots,a_t)$ modulo $\p$}\}$$
has density $\disp\frac{|\Gamma|}{|G|}$.
\nthm
Coming back to our setting, the following corollary is immediate. Recall that $G_n$ is the Galois group of $f^{(n)}$, and is therefore a subgroup of $S_{d^{(n)}}$.
\bcor\label{frob_cor}
Let $\Gamma_n\sub G_n$ be the subset of cycles of length $d^{(n)}$. Then $\displaystyle\delta(\St(f^{(n)}))=\frac{|\Gamma_n|}{|G_n|}$.
\ncor
\bpro
The claim follows from Theorem~\ref{Frobenius} together with the fact that $f^{(n)}$ has degree $d^{(n)}$, and therefore it is irreducible modulo a prime ideal $\p$ if and only if its decomposition type modulo $\p$ is $(d^{(n)})$.
\npro
The key lemma which allows us to prove claim ($\spadesuit$) is the following. Recall that $W_n=\text{Aut}(\T_n)$.
\blem\label{main_lem}
 Let $C_n\sub W_n$ be the set of cycles of length $d^{(n)}$. Then $\disp \frac{|C_n|}{|W_n|}=\frac{1}{d^{(n)}}$.
\nlem
Let us first show that Lemma~\ref{main_lem} implies claim ($\spadesuit$). Since $G_n\leq W_n$, the set $\Gamma_n$ of Corollary~\ref{frob_cor} is a subset of $C_n$. Thus, by Corollary~\ref{frob_cor} we get that
$$\delta(\St(f^{(n)}))=\frac{|\Gamma_n|}{|G_n|}\leq \frac{|C_n|}{|G_n|}=\frac{|C_n|}{|W_n|}\cdot [W_n : G_n].$$
Now Lemma~\ref{main_lem}, together with the fact that, by hypothesis, $[W_n : G_n]=o(d^{(n)})$, implies that $\delta(\St(f^{(n)}))\to 0$.
\bpro[Proof of Lemma~\ref{main_lem}]
By Corollary~\ref{wreath_card}, the statement of the lemma is equivalent to proving that:
$$|C_n|=\prod_{i=1}^n(d_i-1)! {(d_i!)}^{d^{(i-1)}-1}.$$
We will prove this by induction on $n$. For $n=1$, the claim is true because $W_1$ is the symmetric group on $d_1$ symbols, and thus it contains exactly $(d_1-1)!$ cycles of length $d_1$. In order to prove the claim for $C_n$, we need a characterization of cycles of length $d^{(n+1)}$ inside $W_{n+1}$. For every $i\in \{1,\ldots,n\}$, let $R_i\coloneqq\{1,\ldots,d_i\}$ and let $R^{(n)}=R_1\times\ldots\times R_n$. Recall that, by Theorem~\ref{wreath}, $W_{n+1}=W_n\wr_{R^{(n)}}S_{d_{n+1}}$ for $n\geq 1$, and that for every $n$, $W_n$ is naturally a subgroup of $S_{d^{(n)}}$ acting on the set of vertices of $\T$ at level $n$ as explained in Remark~\ref{tree_action}. Let $\overline{g}=(g,(h_r)_{r\in R^{(n)}})\in W_{n+1}$. We claim that $\overline{g}\in C_{n+1}$ if and only if the following two conditions hold:
\begin{enumerate}
 \item $g\in C_n$;
 \item for every $r\in R^{(n)}$, the element $\disp\prod_{i=1}^{d^{(n)}}h_{g^{-i}\cdot r}\in S_{d_{n+1}}$ is a cycle of length $d_{n+1}$.
\end{enumerate}
To prove it, let us first assume that $\overline{g}\in C_{n+1}$ and suppose that there is a cycle of length $a$ in the decomposition of $g$ into disjoint cycles. Then there is a vertex $v$ of $\T$ at level $n$ such that $g^a\cdot v=v$, and therefore $\overline{g}^a$ permutes the $d_{n+1}$ vertices at level $n+1$ that descend from $v$. If such a permutation contains a cycle of length $b$, it follows that there exists a vertex $w$, descending from $v$, such that $\overline{g}^{ab}\cdot w=w$. Since $a\leq d^{(n)}$ and $b\leq d_{n+1}$, equalities must hold because $\overline{g}$ is a cycle of length $d^{(n+1)}$ and therefore no vertex at level $n+1$ can be mapped to itself with less than $d^{(n+1)}$ iterations of $\overline{g}$. This argument shows that $g\in C_n$ and that since $\disp \overline{g}^{d^{(n)}}=\left(\id,\left(\prod_{i=1}^{d^{(n)}}h_{g^{-i}\cdot r}\right)_{r\in R^{(n)}}\right)$, then $\disp \prod_{i=1}^{d^{(n)}}h_{g^{-i}\cdot r}$ is a cycle of length $d_{n+1}$ for every $r\in R^{(n)}$.

Conversely, let $\overline{g}=(g,(h_r)_{r\in R^{(n)}})\in W_{n+1}$ have properties (1) and (2). Suppose that $\overline{g}$ contains a cycle of length $a$. Then there is a vertex $v$ of $\T$ at level $n+1$ such that $\overline{g}^a\cdot v=v$, which implies in particular that $\overline{g}^a\cdot v$ has the same parent of $v$. Since $g$ acts on the set of vertices at level $n$ and is a cycle of length $d^{(n)}$, this proves that $a=d^{(n)}b$, for some $b\leq d_{n+1}$. Since $\disp \overline{g}^{d^{(n)}}=\left(\id,\left(\prod_{i=1}^{d^{(n)}}h_{g^{-i}\cdot r}\right)_{r\in R^{(n)}}\right)$, it follows that there exists some $r_0\in R^{(n)}$ such that $\disp \prod_{i=1}^{d^{(n)}}h_{g^{-i}\cdot r_0}$ permutes the vertices with the same parent of $v$. This permutation is a cycle of length $d_{n+1}$ by (2) and this proves, together with the fact that $\overline{g}^a\cdot v=v$, that $b=d_{n+1}$, and finally that $\overline{g}\in C_{n+1}$.

We are now ready to enumerate the elements in $C_{n+1}$. Let $g$ be an elment of $C_n$ and fix $r_0\in R^{(n)}$. For every $i\in \{1,\ldots,d^{(n)}\}$ choose $h_{g^{-i}\cdot r_0}\in S_{d_{n+1}}$ such that the element
$$h\coloneqq h_{g^{-1}\cdot r_0}\cdot h_{g^{-2}\cdot r_0}\cdot \ldots\cdot h_{g^{-d^{(n)}}\cdot r_0}$$
is a cycle of length $d_{n+1}$ (notice that since $g$ is a cycle of maximal length, the set $\{g^{-1}\cdot r_0,\ldots,g^{-d^{(n)}}\cdot r_0\}$ coincides with $R^{(n)}$). We claim that $\overline{g}\coloneqq (g,(h_r)_{r\in R^{(n)}})$ is a cycle of length $d^{(n+1)}$. By the characterization that we proved above, this is equivalent to prove that
$\disp h_r\coloneqq \prod_{i=1}^{d^{(n)}}h_{g^{-i}\cdot r}$ is a cycle of length $d_{n+1}$ for every $r\in R^{(n)}$. Let $j\in \{1,\ldots,d^{(n)}\}$ be the unique element such that $g^{-j}\cdot r=r_0$. Then $h=h_{g^{-j-1}\cdot r}\cdot h_{g^{-j-2}\cdot r}\cdot\ldots\cdot h_{g^{-j-d^{(n)}}\cdot r}$, and setting
$$k\coloneqq h_{g^{-1}\cdot r}\cdot h_{g^{-2}\cdot r}\cdot\ldots\cdot h_{g^{-j}\cdot r},$$
we have the equality
$$h_r=khk^{-1}.$$
This proves that $h$ and $h_r$ are conjugate in $S_{d_{n+1}}$, and therefore also $h_r$ is a cycle of length $d_{n+1}$.

In other words, we have proved that for every $g\in C_n$, in order to construct an element $(g,(h_r)_{r\in R^{(n)}})\in W_{n+1}$ lying in $C_{n+1}$ it is necessary and sufficient to fix $r_0\in R^{(n)}$ and to find, for every $i\in\{1,\ldots,d^{(n)}\}$, an element $h_{g^{-i}\cdot r_0}\in S_{d_{n+1}}$ such that $\disp\prod_{i=1}^{d^{(n)}}h_{g^{-i}\cdot r_0}$ is a cycle of length $d_{n+1}$. For every $g\in C_n$, we have complete freedom in choosing $h_{g^{-1}\cdot r_0},h_{g^{-2}\cdot r_0},\ldots,h_{g^{-d^{(n)}+1}\cdot r_0}$, which means that we have ${(d_{n+1}!)}^{d^{(n)}-1}$ choices; we must then have
$$h_{g^{-d^{(n)}}\cdot r_0}=\left(\prod_{i=1}^{d^{(n)}-1}h_{g^{-i}\cdot r_0}\right)^{-1}\cdot c,$$
where $c$ is a cycle of length $d_{n+1}$. This means that we are left with $(d_{n+1}-1)!$ choices for $c$, because this is the number of cycles of length $d_{n+1}$. All in all, we have $(d_{n+1}-1)! {(d_{n+1}!)}^{d^{(n)}-1}$ choices for every element $g\in C_n$. Since by the induction hypothesis we have that $|C_{n}|=\prod_{i=1}^n(d_i-1)!{(d_i!)}^{d^{(i-1)}-1}$, we easily get that
$$|C_{n+1}|=(d_{n+1}-1)!{(d_{n+1}!)}^{d^{(n)}-1}\cdot \prod_{i=1}^n(d_i-1)!{(d_i!)}^{d^{(i-1)}-1}=\prod_{i=1}^{n+1}(d_i-1)!\cdot {(d_i!)}^{d^{(i-1)}-1},$$
as desired.
\npro

\section{The generic case}\label{generic_case}
It is a very hard problem, in general, to compute explicitly the Galois groups $G_n=\gal(f^{(n)})$ for a given sequence $\{f_k\}\subseteq \O_K[x]$, even when such sequence is constant (see \cite{bened}, \cite{bosjon2} or \cite{stoll} for examples in degree $2$ and $3$). It is natural to ask what is the generic behaviour of a sequence of fixed spherical index. In this section we will prove that, in an adequate sense, the set of sequences $\{f_k\}$ whose associated arboreal Galois representation is surjective has density $1$. This shows in particular that the set of sequences that fulful the hypotheses of Theorem~\ref{main_thm} has density $1$.

Let us first recall the notion of natural density for subsets of $\O_K^n$ (cf.\ \cite{fermic2}). Let $m\coloneqq [K\colon \Q]$, fix a $\Z$-basis $\mathcal B\coloneqq\{\omega_1,\ldots,\omega_m\}$ for $\O_K$ and define
$$\O_K[N,\mathcal B]\coloneqq \left\{\sum_{i=1}^ma_i\omega_i\in\O_K\colon |a_i|\leq N \,\,\forall i\in\{1,\ldots,m\}\right\}.$$
The density of a subset $A\subseteq \O_K^n$ (with respect to $\mathcal B$) is defined as
$$\D(A)\coloneqq\lim_{N\to \infty}\frac{|A\cap \O_K[N,\mathcal B]^n|}{|\O_K[N,\mathcal B]^n|},$$
provided that the limit exists. As $\O_K^n$ is a countably infinite set, there is no uniform probability distribution on it. The above notion of density is to be thought as the limit, as $N\to\infty$, of the probability that a point chosen uniformly at random inside the $mn$-dimensional hypercube of side $N$ and centered in the origin belongs to $A$, after chosing an identification of $\O_K^n$ with $\Z^{mn}$.

From now on, we will fix a spherical index $\{d_k\}_{k\in \N}$. For every $n\in \N$, let $\X_n\coloneqq \prod_{i=1}^n\O_K^{d_i}$ and let $\mathcal X\coloneqq \prod_{i=1}^{\infty}\O_K^{d_i}$. The set $\X_n$ can be naturally identified with the set of $n$-tuples of monic polynomials $\{f_1,\ldots,f_n\}$ such that each $f_i$ has degree $d_i$, simply by mapping each $f_i$ to the $d_i$-tuple of its coefficients in $\O_K^{d_i}\subseteq \X_n$. Analogously, the set $\X$ can be identified with the set of monic polynomial sequences $\{f_k\}\subseteq \O_K[x]$ of spherical index $\{d_k\}$. We will assume these identifications implicitly in what follows. Identifying $\X_n$ with $\O_K^{\sum_{i=1}^n d_i}$ in the obvious way, we get a well-defined notion of density on $\X_n$. Let $\pi_n\colon \X \to \X_n$ be the natural projection map.
\bdef
The density of a subset $A\subseteq \X$ (with respect to $\mathcal B$) is defined as
$$\lim_{n\to\infty}\D(\pi_n(A)),$$
provided that the limit exists.
\ndef

Again, this does not define a probability distribution on $\X$. However, one can think of choosing a sequence $\{f_k\}$ as an analogue of a discrete stochastic process; our definition of density on $\X$ serves then as an analogous of the concept of joint distribution of the process.

Recall that we denote by $W_n$ the full automorphism group of the spherically homogeneous tree of spherical index $\{d_k\}$ truncated at level $n$, and that for a polynomial sequence $\{f_k\}$, we denote by $G_n$ the Galois group of $f^{(n)}=f_1\circ\ldots\circ f_n$. The goal of this section is to prove the following theorem.
\begin{thm}\label{density}
 Let $A\subseteq \X$ be the set of all polynomial sequences $\{f_k\}$ such that $G_n \simeq W_n$ for every $n\in \N$. Then $\D(\pi_n(A))=1$ for every $n$, and therefore $\D(A)=1$.
\end{thm}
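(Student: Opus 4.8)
The plan is to identify $\pi_n(A)$ with the set of coefficient tuples yielding the full Galois group, and then to establish a density-$1$ statement by combining Odoni's computation of the Galois group of a generic composition with Cohen's quantitative Hilbert irreducibility theorem. Since $\D(A)=\lim_n\D(\pi_n(A))$ by definition, it suffices to show $\D(\pi_n(A))=1$ for each fixed $n$. Note first that $G_n\leq W_n$ (as in Section~\ref{arbo}) with $W_n$ finite, so $G_n\simeq W_n$ is the same as $G_n=W_n$; and since the projection $W_n\to W_{n-1}$ is surjective and carries $G_n$ onto $G_{n-1}$, the equality $G_n=W_n$ forces $G_j=W_j$ for every $j\leq n$. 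Writing $B_n\coloneqq\{(f_1,\ldots,f_n)\in\X_n:\ G_n=W_n\}$, the goal reduces to proving $\pi_n(A)=B_n$ and $\D(B_n)=1$.

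For $\pi_n(A)=B_n$, the inclusion $\pi_n(A)\subseteq B_n$ is immediate, and for the reverse inclusion I would prove an extension lemma: if $(f_1,\ldots,f_m)\in B_m$ then some $f_{m+1}\in\O_K^{d_{m+1}}$ satisfies $(f_1,\ldots,f_{m+1})\in B_{m+1}$; iterating this greedily promotes any tuple of $B_n$ to an infinite sequence lying in $A$. To prove the lemma, set $g\coloneqq f^{(m)}$, which is separable of degree $d^{(m)}$ with splitting field $L$ and $\gal(g/K)=W_m$, and let $f_{m+1}$ carry indeterminate coefficients. Over $L$ one has $f^{(m+1)}=g\circ f_{m+1}=\prod_{\beta}(f_{m+1}(x)-\beta)$, the product being over the distinct roots $\beta$ of $g$; each factor is a generic monic polynomial of degree $d_{m+1}$ over $L$ (after translating the constant coefficient of $f_{m+1}$ by $\beta$), hence has Galois group $S_{d_{m+1}}$, and Odoni's lemma shows that the corresponding $d^{(m)}$ extensions are linearly disjoint over the field generated over $L$ by the coefficients of $f_{m+1}$. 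It follows that $\gal(f^{(m+1)}/K(\text{coefficients of }f_{m+1}))=W_m\wr S_{d_{m+1}}=W_{m+1}$, and the plain form of Hilbert's irreducibility theorem then produces a specialization of $f_{m+1}$ keeping this group.

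For $\D(B_n)=1$, replace all coefficients of $f_1,\ldots,f_n$ by independent indeterminates $\mathbf t$ and let $F_n(x)\in K(\mathbf t)[x]$ be the resulting $n$-fold composition; iterating the algebraic step above --- equivalently, invoking Odoni's theorem on composites of generic polynomials \cite{odoni} --- yields $\gal(F_n/K(\mathbf t))=W_n$. Cohen's quantitative Hilbert irreducibility theorem \cite{cohen}, applied to this parametric family over the Hilbertian field $K$, shows that the set of specializations $\mathbf t\mapsto(f_1,\ldots,f_n)\in\X_n$ whose specialized Galois group is all of $W_n$ has density $1$ for the box-counting density on $\X_n\cong\O_K^{\sum_i d_i}$; that exceptional set is exactly $\X_n\setminus B_n$, so $\D(B_n)=1$. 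Combined with the previous paragraph this gives $\D(\pi_n(A))=\D(B_n)=1$ for every $n$, and hence $\D(A)=\lim_n\D(\pi_n(A))=1$.

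The genuinely new content is the bookkeeping of the first step; the weight of the argument rests on the two imported results, and the care needed is in verifying they apply in the required generality. Odoni's computation must be run for a sequence of generic polynomials of \emph{varying} degrees $d_1,\ldots,d_n$ and, in the extension lemma, for a generic polynomial composed after a \emph{fixed} polynomial with full group; the induction uses only the standing separability hypothesis, but the linear disjointness of the various $S_d$-extensions over $L$ should be written out carefully --- this is the main potential obstacle. Cohen's theorem then has to be used over the number field $K$ with precisely the box-counting density defining $\D$, so one must push it through the identification $\O_K\cong\Z^m$; this interface, rather than any single hard step, is where the remaining work concentrates.
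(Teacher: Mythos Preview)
Your proposal is correct and follows the same overall architecture as the paper: identify $\pi_n(A)$ with the set of specializations of a generic composition having full Galois group, invoke Odoni's theorem (the paper's Theorem~\ref{odo}) to compute that generic Galois group as $W_n$, use Cohen's quantitative Hilbert irreducibility (the paper's Theorem~\ref{coh}) for the density statement, and extend finite tuples to infinite sequences via the same Odoni/Hilbert mechanism.

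One point is worth flagging: you observe that $G_n=W_n$ already forces $G_j=W_j$ for every $j\le n$ (since $G_n\twoheadrightarrow G_j$ and $W_n\twoheadrightarrow W_j$), so your $B_n$ coincides with $\pi_n(A)$ and a \emph{single} application of Cohen's theorem at level $n$ suffices. The paper does not use this shortcut; instead it characterizes $\pi_n(A)$ by the conjunction of conditions $G_i\simeq W_i$ for all $i\le n$, applies Cohen's theorem separately to each generic composite $g^{(i)}$, and bounds the complement by a union $\bigcup_{i=1}^n B_i$. Your streamlining is a genuine, if minor, improvement. Conversely, the paper simply cites Odoni's Corollary~8.4 for the wreath product computation, whereas you sketch the linear-disjointness argument over the splitting field $L$; both amount to the same input, and your stated concern about writing it out carefully is already absorbed by that citation.
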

To prove the theorem, we need to recall the following results.
\begin{thm}[{\cite[Corollary~8.4]{odoni}}]\label{odo}
 Let $F$ be a field of characteristic 0, and let $f\in F[x]$ be monic and squarefree with Galois group $G$ over $F$. For every $\ell\geq 2$, let $t_1,\ldots,t_{\ell}$ be indeterminates over $F$ and let $g(x,t_1,\ldots,t_{\ell})\coloneqq x^{\ell}+t_1x^{\ell-1}+\ldots+t_{\ell}\in F[x,t_1,\ldots,t_{\ell}]$. Then the Galois group of $f\circ g$ over $F(t_1,\ldots,t_{\ell})$ is isomorphic to the wreath product $G\wr S_{\ell}$.
\end{thm}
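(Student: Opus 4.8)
The plan is to combine Odoni's wreath-product theorem (Theorem \ref{odo}), which computes the Galois group of an iterated composition whose coefficients are indeterminates, with the effective form of Hilbert's irreducibility theorem due to Cohen \cite{cohen}, which guarantees that the specializations at which the Galois group drops are negligible for the box density defining $\D$.

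First I would treat the generic situation. For $1\le i\le n$ introduce indeterminates $a_{i,1},\dots,a_{i,d_i}$, set $F_i\coloneqq x^{d_i}+a_{i,1}x^{d_i-1}+\dots+a_{i,d_i}$ and $F^{(k)}\coloneqq F_1\circ\dots\circ F_k$, a monic polynomial in $x$ over the rational function field $L_k\coloneqq K(a_{i,j}\colon 1\le i\le k)$. The claim is that $\gal(F^{(k)}/L_k)\simeq W_k$ for all $k\le n$, proved by induction on $k$: the base case $\gal(F_1/L_1)\simeq S_{d_1}$ is the classical fact that the generic monic polynomial has full symmetric Galois group, and for the inductive step one uses that $F^{(k)}=F^{(k-1)}\circ F_k$, that $F^{(k-1)}$ has Galois group $W_{k-1}$ over $L_{k-1}$ (hence over $L_k$ as well, since the new variables are algebraically independent of $L_{k-1}$), and that $F^{(k-1)}$ is squarefree over $L_{k-1}$ because $\car L_{k-1}=0$ and $W_{k-1}$ is transitive on the leaves of $\T_{k-1}$, so $F^{(k-1)}$ is irreducible. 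Theorem \ref{odo}, applied over $L_{k-1}$ with $f=F^{(k-1)}$, $\ell=d_k$ and $g=F_k$, then yields $\gal(F^{(k)}/L_k)\simeq W_{k-1}\wr S_{d_k}\simeq W_k$ by associativity of the wreath product.

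Next I would descend to $\O_K$. Since $F^{(k)}$ is a composition of monic polynomials, its coefficients lie in $\Z[a_{i,j}]$, so specializing the $a_{i,j}$ to the coefficients of a point $(f_1,\dots,f_k)\in\X_k$ produces $f^{(k)}\in\O_K[x]$, whose Galois group (when $f^{(k)}$ is separable) embeds in $W_k$ by the construction of Section \ref{arbo}. Cohen's theorem, applied to $F^{(k)}$, says that the specializations $(f_1,\dots,f_k)$ for which $G_k\simeq W_k$ fails have box-counting function $o(|\O_K[N,\mathcal B]^{\sum_{i\le k}d_i}|)$; hence $B^{(k)}\coloneqq\{(f_1,\dots,f_k)\in\X_k\colon G_k\simeq W_k\}$ has density $1$ in $\X_k$. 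Pulling each $B^{(k)}$ back along the coordinate projection $\X_n\to\X_k$ gives a cylinder of density $1$ in $\X_n$, and a finite intersection of density-$1$ subsets of $\X_n$ again has density $1$ (its complement's box-count is at most the sum of the complements' box-counts); therefore $B_n\coloneqq\{(f_1,\dots,f_n)\in\X_n\colon G_k\simeq W_k \text{ for all }k\le n\}$ satisfies $\D(B_n)=1$.

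It remains to compare $B_n$ with $\pi_n(A)$. Since $\pi_n(A)\subseteq B_n$ is trivial, it suffices to prove $B_n\subseteq\pi_n(A)$, which then forces $\D(\pi_n(A))=1$. Given $(f_1,\dots,f_n)\in B_n$ I extend it termwise: having produced $f_1,\dots,f_m$ with $m\ge n$ and $G_k\simeq W_k$ for all $k\le m$, the polynomial $f^{(m)}$ is monic, irreducible (transitivity of $W_m$) and separable ($\car K=0$), so Theorem \ref{odo} over $F=K$ gives $\gal(f^{(m)}\circ g/K(t_1,\dots,t_{d_{m+1}}))\simeq W_m\wr S_{d_{m+1}}\simeq W_{m+1}$, and Cohen's theorem provides a specialization $f_{m+1}\in\O_K^{d_{m+1}}$ of $(t_1,\dots,t_{d_{m+1}})$ --- indeed a density-$1$, hence nonempty, set of them --- with $f^{(m+1)}=f^{(m)}\circ f_{m+1}$ and $G_{m+1}\simeq W_{m+1}$; since the choice of $f_k$ for $k>m$ does not affect $G_j$ for $j\le m$, the resulting sequence lies in $A$ and projects to $(f_1,\dots,f_n)$. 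This gives $\D(\pi_n(A))=1$ for every $n$, whence $\D(A)=\lim_n\D(\pi_n(A))=1$. The step I expect to be delicate is the interface between the two inputs: verifying that Cohen's effective Hilbert irreducibility is available exactly as needed --- over a general number field $K$, for the boxes $\O_K[N,\mathcal B]$, and for polynomials whose $x$-degree $d^{(k)}$ and number of parameters both grow with $k$ --- and checking at each inductive step that the squarefreeness hypothesis of Theorem \ref{odo} genuinely holds on the relevant locus; the wreath-product bookkeeping and density manipulations are then routine.
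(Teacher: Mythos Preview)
Your proposal does not address the stated theorem at all. Theorem~\ref{odo} is Odoni's wreath-product result, quoted from \cite[Corollary~8.4]{odoni}; the paper does not prove it but merely cites it, so there is no ``paper's own proof'' to compare against. What you have written is not a proof of Theorem~\ref{odo}: you repeatedly \emph{invoke} Theorem~\ref{odo} as a black box, but nowhere do you argue why the Galois group of $f\circ g$ over $F(t_1,\ldots,t_\ell)$ equals $G\wr S_\ell$.

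What your proposal actually proves is Theorem~\ref{density}. If that was the intent, then your argument follows essentially the same route as the paper's: set up the generic compositions over a rational function field, apply Theorem~\ref{odo} inductively to get Galois group $W_k$ at each level, then use Cohen's effective Hilbert irreducibility (Theorem~\ref{coh}) to control the bad specializations in the boxes $\O_K[N,\mathcal B]$, and use a union bound over the levels $k\le n$; the identification $\pi_n(A)=B_n$ via the extension step is also handled the same way in both. The only cosmetic difference is the order of presentation (the paper first establishes $\pi_n(A)=B_n$, then computes $\D(B_n)$; you do the reverse).
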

The following theorem is stated in greater generality in \cite{cohen}. We report it here in a simpler version which sufficies for our purposes.
\begin{thm}\label{coh}
 Let $K$ be a number field, let $x,t_1,\ldots,t_{\ell}$ be indeterminates over $K$ and let $f(x,t_1,\ldots,t_{\ell})\in \O_K[x,t_1,\ldots,t_{\ell}]$ have Galois group $G$ over $K(t_1,\ldots,t_{\ell})$. Then there exist constants $c_1,c_2$, depending on $f,K$ and $\mathcal B$, such that for all $N>c_1$, the number of $\ell$-tuples $(\alpha_1,\ldots,\alpha_\ell)\in \O_K[N,\mathcal B]^{\ell}$ such that the Galois group of $f(x,\alpha_1,\ldots,\alpha_{\ell})$ over $K$ is not isomorphic to $G$ does not exceed $c_2N^{m(\ell-1/2)}\log N$. 
\end{thm}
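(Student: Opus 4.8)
The plan is to prove this as an effective form of Hilbert's irreducibility theorem, following Cohen \cite{cohen}: reduce the statement to a counting problem for a resolvent polynomial, feed in a uniform local estimate coming from Lang--Weil, and conclude with the large sieve over $\O_K$.

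\emph{Step 1 (reduction to resolvents).} Write $t=(t_1,\dots,t_\ell)$ and $d=\deg_x f$. Since $f$ is separable over $K(t)$, the polynomial $R(t):=\disc_x f(t,x)\in\O_K[t]$ (multiplied by the leading coefficient of $f$ in $x$, if $f$ is not monic in $x$) is nonzero, and for $\alpha\in\O_K^\ell$ with $R(\alpha)\neq 0$ the specialization $f(x,\alpha)$ is separable and, by the standard reduction theory for Galois groups of specializations, $\gal(f(x,\alpha)/K)$ is conjugate to a subgroup of $G$ in its action on the $d$ roots. The number of $\alpha\in\O_K[N,\mathcal B]^\ell$ with $R(\alpha)=0$ is $O(N^{m(\ell-1)})$, which is negligible for the bound we want, so I would henceforth restrict to $\alpha$ with $R(\alpha)\neq 0$. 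For such $\alpha$, if $\gal(f(x,\alpha)/K)\not\simeq G$ then it lies in some maximal subgroup $H<G$, and this forces the $G/H$-resolvent $R_H(t,Y)\in\O_K[t,Y]$ of $f$ --- which may be taken monic in $Y$ and is irreducible over $K(t)$ of degree $[G:H]\geq 2$ in $Y$, because $G$ acts transitively on the coset space $G/H$ --- to acquire a root in $K$ upon specializing $t=\alpha$. As there are only finitely many conjugacy classes of maximal subgroups $H\leq G$, it suffices to bound, for a single polynomial $P(t,Y)\in\O_K[t,Y]$ that is monic of degree $e\geq 2$ in $Y$ and irreducible over $K(t)$, the quantity
\[
\mathcal N(P,N):=\#\{\alpha\in\O_K[N,\mathcal B]^\ell : P(\alpha,Y)\text{ has a root in }K\}.
\]
Because $P$ is monic in $Y$, such a root automatically lies in $\O_K$ and has height $\ll N^{O(1)}$, being a root of a polynomial whose coefficients are polynomials in $\alpha$.

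\emph{Step 2 (uniform local input).} Let $\Omega$ be the splitting field of $P(t,Y)$ over $K(t)$, with Galois group $\widetilde G$ acting transitively on the $e\geq 2$ roots. By Jordan's lemma, $\widetilde G$ contains a fixed-point-free element. I would then restrict attention to primes $\p\subseteq\O_K$ that split completely in the constant field of $\Omega$ --- a set of positive density by Chebotarev over $K$ --- so that the relevant cover over the residue field is geometrically connected, and apply the Lang--Weil estimates: these produce a constant $\delta=\delta(P)>0$ and a bound $B$ such that, for every such $\p$ with $q:=N_{K/\Q}(\p)\geq B$, at least $\delta q^\ell$ of the residue vectors $\bar\alpha\in(\O_K/\p)^\ell$ have the property that $P(\bar\alpha,Y)$ has \emph{no} root in $\O_K/\p$ (the $O(q^{\ell-1})$ classes on the branch locus modulo $\p$ being discarded along the way). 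The essential point is that $\delta$ does not depend on $\p$.

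\emph{Step 3 (large sieve).} If $\alpha$ contributes to $\mathcal N(P,N)$, then $P(\alpha,Y)$ has an $\O_K$-root, hence a root modulo every prime of $\O_K$; so the set $\mathcal A$ of such $\alpha$ reduces, modulo each admissible $\p$ with $B\leq q\leq Q$, into the complement of a family of at least $\delta q^\ell$ residue classes. Identifying $\O_K^\ell$ with $\Z^{m\ell}$ and viewing $\O_K[N,\mathcal B]^\ell$ as a box containing $\asymp N^{m\ell}$ lattice points, the large-sieve inequality over $\O_K$ applied with these moduli gives, in the admissible range $Q\leq c\,N^{m/2}$,
\[
|\mathcal A|\ \ll\ \frac{N^{m\ell}}{\displaystyle\sum_{\substack{\p\ \text{admissible}\\ q\leq Q}}\dfrac{\delta q^\ell}{q^\ell-\delta q^\ell}}\ \ll_{\delta}\ \frac{N^{m\ell}\log Q}{Q},
\]
since the denominator is $\gg\delta\,Q/\log Q$ by the prime ideal theorem. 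Choosing $Q\asymp N^{m/2}$ yields $|\mathcal A|\ll N^{m(\ell-1/2)}\log N$. Summing over the finitely many maximal subgroups $H$ and adding the $O(N^{m(\ell-1)})$ contribution from Step~1 gives the theorem, with $c_1$ a threshold past which all the asymptotic estimates hold and $c_2$ the accumulated implied constant, both depending only on $f$, $K$, and $\mathcal B$.

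\emph{Main obstacle.} The hard part is the uniformity in Step~2: one needs the proportion $\delta$ of root-free residue classes bounded below \emph{independently of $\p$}, which requires the Lang--Weil (equivalently, effective Chebotarev over finite fields) estimate with its error term controlled purely in terms of the geometry --- degree and dimension --- of the variety cut out by $P$, together with careful bookkeeping of arithmetic versus geometric monodromy, handled here by passing to primes split in the constant field extension. A secondary technical point is to set up the appropriate form of the large sieve over the Dedekind ring $\O_K$ and to verify that the level $Q$ may be taken as large as $\asymp N^{m/2}$; this is precisely what produces the exponent $\ell-1/2$ and the factor $\log N$ in the statement.
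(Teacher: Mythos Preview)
The paper does not actually prove this theorem: its ``proof'' consists solely of the citation ``See \cite[Theorem~2.1]{cohen}'' together with a remark identifying the notation $\O_K[N,\mathcal B]$ with Cohen's $\Z_K(N^m)$. Your proposal goes well beyond this by sketching the argument itself, and the route you describe---reduction to resolvents attached to maximal subgroups, a uniform Lang--Weil lower bound for the proportion of root-free residue classes, and the large sieve over $\O_K$ with level $Q\asymp N^{m/2}$---is precisely Cohen's method, so in that sense you and the paper are aligned, only you have unpacked the black box.
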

\begin{proof}
 See \cite[Theorem~2.1]{cohen}. We remark that the set denoted by $\O_K[N,\mathcal B]$ by us, coincides with the set denoted by $\Z_K(N^m)$ in~\cite{cohen}.
\end{proof}
\begin{proof}[Proof of Theorem~\ref{density}]
 We first notice that for every $n\in \N$, we have that
 $$\pi_n(A)=\{(f_1,\ldots,f_n)\in \X_n\colon \gal(f^{(i)})\simeq W_i \mbox{ for every }i\in \{1,\ldots,n\}\}.$$
 In fact, by definition the set $\pi_n(A)$ coincides with the set of $n$-tuples of polynomials $(f_1,\ldots,f_n)$ that satisfy the following two conditions:
 \begin{enumerate}[i)]
  \item $G_i\simeq W_i$ for every $i\in \{1,\ldots,n\}$;
  \item there exists a sequence of polynomials $\{f_{n+k}\}_{k\in \N}$ of spherical index $\{d_{n+k}\}_{k\in \N}$ such that $G_i\simeq W_i$ for every $i>n$.
 \end{enumerate}
 Theorems \ref{odo} and \ref{coh} show that for every $n$-tuple of polynomials satisfying i) it is possible to construct a sequence (and in fact infinitely many) $\{f_{n+k}\}_{k\in \N}$ satisfying ii): let $(f_1,\ldots,f_s)\in \X_s$ for some $s\geq n$ be such that $G_i\simeq W_i$ for every $i\in \{1,\ldots,s\}$ and let $g\coloneqq x^{d_{s+1}}+t_1x^{d_{s+1}-1}+\ldots+t_{d_{s+1}}\in K[x,t_1,\ldots,t_{d_{s+1}}]$. Then by Theorem~\ref{odo}, the polynomial $f^{(s)}\circ g$ has Galois group $W_{s+1}$ over $K(t_1,\ldots,t_{d_{s+1}})$ and by Theorem~\ref{coh} there exist infinitely many specializations $(\alpha_1,\ldots,\alpha_{d_{s+1}})\in \O_K^{d_{s+1}}$ such that $(f^{(s)}\circ g)(x,\alpha_1,\ldots,\alpha_{d_{s+1}})$ has Galois group $W_{s+1}$. Thus, it is enough to set $f_{s+1}\coloneqq g(x,\alpha_1,\ldots,\alpha_{d_{s+1}})$ and to apply the same argument inductively to obtain a sequence $\{f_{n+k}\}_{k\in \N}$ satisfying ii).
 
 To compute the density of $\pi_n(A)$, for every $i\in \{1,\ldots,n\}$ set
 $$g_i(x,t_1^{(i)},\ldots,t_{d_i}^{(i)})\coloneqq x^{d_i}+t_1^{(i)}x^{d_i-1}+\ldots+t_{d_i}^{(i)}\in K(t_1^{(i)},\ldots,t_{d_i}^{(i)})[x].$$
 Here $x$ and the $t_j^{(i)}$'s are algebraically independent indeterminates over $K$. It is a well-known fact (see for example~\cite[Corollary~7.3]{odoni}) that the Galois group of $g_1$ over $K(t_1^{(1)},\ldots,t_{d_1}^{(1)})$ is isomorphic to $S_{d_1}$. Now applying Theorem~\ref{odo} with $F=K(t_1^{(1)},\ldots,t_{d_1}^{(1)})$ it follows that the Galois group of $g_1\circ g_2$ over $K(t_1^{(1)},\ldots,t_{d_1}^{(1)},t_1^{(2)},\ldots,t_{d_2}^{(2)})$ is $S_{d_1}\wr S_{d_2}$. Repeating inductively the same argument shows that for every $i\in \{1,\ldots,n\}$ the Galois group of $g^{(i)}\coloneqq g_1\circ g_2\circ\ldots\circ g_i$ over $K(t_1^{(1)},\ldots,t_{d_i}^{(i)})$ is $W_i$. Letting $D_i\coloneqq \sum_{j=1}^id_j$ for every $i\in \{1,\ldots,n\}$, we therefore have that the set $\pi_n(A)\cap \O_K[N,\mathcal B]^{D_n}$ coincides with the set
 $$\{(\alpha_j)_{j=1}^{D_n}\in \O_K[N,\mathcal B]^{D_n}\colon \gal(g^{(i)}(x,\alpha_1,\ldots,\alpha_{D_i}))\simeq W_i\,\,\forall\,i\in\{1,\ldots,n\}\}.$$
 By Theorem~\ref{coh}, we can find constants $c_1,c_2$, depending on all the $g_i$'s, on $\mathcal B$ and on $K$, such that for all $N>c_1$ and for all $i\in\{1,\ldots,n\}$, the number of $(\alpha_1,\ldots,\alpha_{D_i})\in \O_K[N,\mathcal B]^{D_i}$ such that the Galois group of $g^{(i)}(\alpha_1,\ldots,\alpha_{D_i})$ is not $W_i$ does not exceed $c_2N^{m(D_i-1/2)}\log N$. Letting
 $$B_i\coloneqq \{(\alpha_1,\ldots,\alpha_{D_n})\in \O_K[N,\mathcal B]^{D_n}\colon  \gal(g^{(i)}(x,\alpha_1,\ldots,\alpha_{D_i}))\not\simeq W_i\}$$
 for every $i\in \{1,\ldots,n\}$, it follows that the cardinality of $B_i$ is bounded by $c_2N^{m(D_n-1/2)}\log N$. Since $\O_K[N,\mathcal B]^{D_n}\setminus\pi_n(A)\subseteq \bigcup_{i=1}^n B_i$, we have that
 $$|\pi_n(A)\cap \O_K[N,\mathcal B]^{D_n}|\geq |\O_K[N,\mathcal B]^{D_n}|-\sum_{i=1}^n|B_i|\geq |\O_K[N,\mathcal B]^{D_n}|-nc_2N^{m(D_n-1/2)}\log N,$$
 and the claim follows simply by the fact that $|\O_K[N,\mathcal B]^{D_n}|=(2N+1)^{mD_n}$.
 
\end{proof}

%\subsection{A non-constant sequence of constant spherical index 2}
%Let $K$ be a number field with ring of integers $\O_K$. By a straightforward generalization of some of the results in \cite{stoll}, it is possible to show the existence of a sequence (and in fact of infinitely many of them) $\{a_k\}_{k\in \N}\sub \Z$ such that:
%\begin{itemize}
% \item $a_k$ is strictly increasing;
% \item if $f_k=x^2+a_k$, then $G_n\simeq W_n$ for every $n\in \N$.
%\end{itemize}
%We will review briefly the main ideas of \cite{stoll} and see how they can be adapted to our case.

%For every $n\in \N$, let $c_i\coloneqq f^{(n)}(0)$. \cite{fermic1}

\bibliographystyle{plain}
\bibliography{bibliografia}

\end{document}